\newtheorem{thm}{Theorem}[section]
\newtheorem{lem}{Lemma}[section]
\newtheorem{defn}{Definition}[section]
\newtheorem{ex}{Example}[section]
\begin{document}
\noindent

\begin{center}
 {\bf \large Pseudo Schur Complements, Pseudo Principal Pivot Transforms and Their Inheritance Properties}
\vspace{.5cm}

             { \bf Kavita Bisht}\\
Department of Mathematics\\
                         Indian Institute of Technology Madras\\

                               Chennai 600 036, India \\
            
                and\\
              {\bf K.C. Sivakumar} \\
                         Department of Mathematics\\
                         Indian Institute of Technology Madras\\

                               Chennai 600 036, India. \\
\end{center}

\begin{abstract}
 In this short note, we prove some basic results on pseudo Schur complement and the pseudo principal pivot
transform of a block matrix. Pseudo Schur complement and pseudo principal pivot ransform are extensions of the Schur complement and the principal pivot transform, respectively, where
the usual inverse is replaced by the Moore-Penrose inverse. The objective is to record these results for use in future.
\end{abstract}
\section{Introduction}

Let $\mathbb{R}^{m \times n}$ denote the set of all real matrices of order $m \times n$, $\mathbb{R}^{n}$ denote the n dimensional real Euclidean space and $\mathbb{R}^{n}_{+}$ denote the nonnegative orthant in $\mathbb{R}^{n}$.  For a matrix $M\in \mathbb{R}^{m\times n}$, we denote the null space and the transpose of $M$ by $N(M)$ 
and $M^T$, respectively. The Moore-Penrose inverse of a matrix $M \in \mathbb{R}^{m\times n}$, 
denoted by $M^\dagger$ is the unique solution $X\in \mathbb{R}^{n\times m}$ of the equations: 
$M=MXM, X=XMX, (MX)^T=MX$ and $(XM)^T=XM$. If $M$ is nonsingular, then of course, we have 
$M^{-1}=M^\dagger$. Recall that $M\in \mathbb{R}^{n \times n}$ is called range-symmetric 
(or an EP matrix) if $R(M^T)=R(M)$. For this class of matrices, the Moore-Penrose inverse $A^{\dagger}$
commutes with $A$. Let $M \in \mathbb{R}^{m\times n}$ be a block matrix partitioned as 
\begin{center}
 $\left( \begin{array}{lr} A  & B \\ C &  D  \end{array}
\right)$
\end{center}
where $A \in \mathbb{R}^{k \times k}$ is nonsingular. Then the classical Schur 
complement of $A$ in $M$ denoted by $M/A$ is given by $F=D-CA^{-1}B \in \mathbb{R}^{(m-k) \times (n-k)}$. 
This notion has proved to be a fundamental idea in many applications like numerical analysis,
statistics and operator inequalities, to name a few. This expression for the Schur complement was fruther extended by Carlson \cite{carl} to 
include all matrices of the form $D-CA^{\{1\}}B$, where $A^{\{1\}}$ denotes any arbitrary 
$\{1\}$-inverse of $A$ (A $\{1\}$-inverse of $A$ is a any matrix $X$ which satisfies $AXA=A$). 
Carlson proved that this generalized Schur complement is invariant under the choice of $A^{\{1\}}$ 
if and only if $B=0$ or $C=0$ or $R(B) \subseteq R(A)$ and $R(C^T) \subseteq R(A^T)$. He studies the 
relationship of the generalized Schur complements to certain optimal rank problems. 
The expression $D-CA^{\dagger}B$, is also referred to in the literature as the generalized 
Schur complement \cite{carlhaymark}, where the Sylvester's determinantal formula and a quotient 
formula are proved, among other things. Nevertheless, since we will be concerned with the 
case of the Moore-Penrose inverse (which is also called the pseudo inverse), we shall 
refer to it as the pseudo Schur complement.\\
Again, consider $M$, 
partitioned as above. If $A$ is nonsingular, then the principal pivot transform (PPT) of 
$M$ is the block matrix defined by 
\begin{center} 
\[ \left( \begin{array}{cc}
A^{-1} & -A^{-1}B \\
CA^{-1} & F
\end{array} \right)\] 
\end{center}
where $F$ is again, the Schur complement $F=D-CA^{-1}B$. For an excellent survey of PPT we refer the reader to \cite{tsat}. Just as in the case of the 
generalized Schur complement, it is natural to study the PPT when the usual inverses are replaced 
by generalized inverses. Meenakshi \cite{meen}, was perhaps the first to study such a generalization 
for the Moore-Penrose inverse.\\
The principal pivot transform involving the Moore-Penrose inverse has been studied in the literature. In what follows, 
we consider it once again, albeit with a different name, the pseudo principal pivot transform. We also find it 
natural to consider the complementary pseudo principal pivot transform. 

\begin{defn}\label{D: D1}  Let $M$ be defined as above. Then the pseudo principal pivot transform of $M$ relative to $A$ is defined by\\
\begin{center}
$H:=pppt(M,A)_{\dagger}=
\left(\begin{array}{cc}
  A^\dagger & -A^\dagger B \\
  CA^\dagger  & F
 \end{array}\right)$,
\end{center} where $F=D-CA^\dagger B$. The complementary pseudo principal pivot transform of $M$ relative to $D$ is defined by\\
\begin{center}
$J:=cpppt(M,D)_{\dagger}=
\left(\begin{array}{cc}
 G & BD^\dagger \\
-D^\dagger C & D^\dagger
\end{array}\right)$, 
\end{center} 
 where $G=A-BD^\dagger C$. 
\end{defn}

We now prove two extensions of the domain-range exchange property, well known in the nonsingular case. 

\begin{lem}\label{L2}
(i) Suppose that $R(B)\subseteq R(A)$ and $R(C^T)\subseteq R(A^T)$.
Then $M$ and $H=pppt(M,A)_{\dagger}$ are related by the formula:\\
\begin{center}
$M
\left(\begin{array}{cc}
x^{1}\\
x^{2}
\end{array}\right)
=
\left(\begin{array}{cc}
AA^\dagger y^{1} \\
y^{2}
\end{array}\right)$
if and only if $H
\left(\begin{array}{cc}
  y^{1} \\
  x^{2}
 \end{array}\right)
=
\left(\begin{array}{cc}
  A^\dagger A x^{1} \\
  y^{2}
 \end{array}\right)$.\\
\end{center}
(ii) Suppose that $R(C)\subseteq R(D)$ and $R(B^T)\subseteq R(D^T)$.  
Then $M$ and $J=cpppt(M,D)_{\dagger}$ are related by the formula:
\begin{center}
$M
\left(\begin{array}{cc}
x^{1}\\
x^{2}
\end{array}\right)
=
\left(\begin{array}{cc}
y^{1}\\
DD^\dagger y^{2}
\end{array}\right)$
if and only if
$J
\left(\begin{array}{cc}
y^{1}\\
x^{2}
\end{array}\right)
=
\left(\begin{array}{cc}
x^{1}\\
D^\dagger Dy^{2}
\end{array}\right)$.
\end{center} 
\end{lem}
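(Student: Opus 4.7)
The plan is to verify each biconditional by block-expansion, using the range inclusions only through the projector identities they encode. First I would translate the hypotheses into their working form: $R(B)\subseteq R(A)$ is equivalent to $AA^{\dagger}B=B$, since $AA^{\dagger}$ is the orthogonal projector onto $R(A)$; and $R(C^{T})\subseteq R(A^{T})$ is equivalent (after transposing and using $(A^{T})^{\dagger}=(A^{\dagger})^{T}$) to $CA^{\dagger}A=C$. These two identities, together with the defining equations of the Moore--Penrose inverse, are the only machinery I will need.

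For part (i), I would expand $M(x^{1},x^{2})^{T}=(AA^{\dagger}y^{1},y^{2})^{T}$ into the pair $Ax^{1}+Bx^{2}=AA^{\dagger}y^{1}$ and $Cx^{1}+Dx^{2}=y^{2}$, and the companion system $H(y^{1},x^{2})^{T}=(A^{\dagger}Ax^{1},y^{2})^{T}$ into $A^{\dagger}y^{1}-A^{\dagger}Bx^{2}=A^{\dagger}Ax^{1}$ and $CA^{\dagger}y^{1}+Fx^{2}=y^{2}$. The forward implication then comes from left-multiplying the first $M$-equation by $A^{\dagger}$ (which collapses $A^{\dagger}AA^{\dagger}$ to $A^{\dagger}$ and yields the first $H$-equation) and by $CA^{\dagger}$ (which, via $CA^{\dagger}A=C$ and the second $M$-equation, together with $F=D-CA^{\dagger}B$, produces the second $H$-equation). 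For the converse I would left-multiply the first $H$-equation by $A$ (using $AA^{\dagger}A=A$ and $AA^{\dagger}B=B$ to recover the first $M$-equation) and by $C$ (using $CA^{\dagger}A=C$ to express $Cx^{1}$ in terms of $y^{1}$ and $x^{2}$, then substituting into the second $H$-equation and simplifying via $F=D-CA^{\dagger}B$ to recover the second $M$-equation).

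Part (ii) follows by the same scheme with the roles of $A$ and $D$ interchanged: the working form of the hypotheses becomes $DD^{\dagger}C=C$ and $BD^{\dagger}D=B$, and the analogous pattern of left-multiplications (by $D^{\dagger}$ and $BD^{\dagger}$ in one direction, and by $D$ and $B$ in the other) converts the $M$-system to and from the $J$-system. The argument is essentially bookkeeping; the only point that requires care is to invoke each projector identity on the correct side and in the right order, distinguishing $AA^{\dagger}$ from $A^{\dagger}A$ (and analogously $DD^{\dagger}$ from $D^{\dagger}D$). I do not expect any real obstacle beyond keeping these two projectors straight and noticing that, at each step, exactly one of the two inclusion hypotheses is the one being used.
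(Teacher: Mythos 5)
Your proposal is correct and follows essentially the same route as the paper: expand both systems into their block equations, use $AA^{\dagger}B=B$ and $CA^{\dagger}A=C$ as the working form of the range inclusions, and pass between the two systems by the indicated left-multiplications (the paper multiplies by $A^{\dagger}$ and then by $C$, which is the same as your single multiplication by $CA^{\dagger}$). The paper likewise proves only part (i) and declares part (ii) analogous, exactly as you do.
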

\begin{proof}
We prove (i). The proof for (ii) is similar. Suppose that $M
\left(\begin{array}{cc}
x^{1}\\
x^{2}
\end{array}\right)
=
\left(\begin{array}{cc}
AA^\dagger y^{1}\\
y^{2}
\end{array}\right)$. Then $Ax^{1}+Bx^{2}=AA^\dagger y^{1}$ and $Cx^{1}+Dx^{2}=y^{2}$. Premultipling the first equation by 
$A^\dagger$ (and rearranging) we get $A^\dagger y^{1}-A^\dagger Bx^{2}=A^\dagger Ax^{1}$. Premultiplying this equation by 
$C$, we then have  $CA^\dagger y^{1}-CA^\dagger Bx^{2}=CA^\dagger Ax^{1}=Cx^{1}$. So,  
$CA^\dagger y^{1}+Fx^{2}
=CA^\dagger y^{1}+Dx^{2}-CA^\dagger Bx^{2}
=Cx^{1}+Dx^{2}
=y^{2}$. Thus, $H
\left(\begin{array}{cc}
y^{1}\\
x^{2}
\end{array}\right)
=
\left(\begin{array}{cc}
A^\dagger y^{1}-A^\dagger Bx^{2}\\
CA^\dagger y^{1}+Fx^{2}
\end{array}\right)
=
\left(\begin{array}{cc}
A^\dagger Ax^{1}\\
y^{2}
\end{array}\right)$.\\
Conversely, let $H
\left(\begin{array}{cc}
y^{1}\\
x^{2}
\end{array}\right)
=
\left(\begin{array}{cc}
A^\dagger Ax^{1}\\
y^{2}
\end{array}\right)$. Then $A^\dagger y^{1}-A^\dagger Bx^{2}=A^\dagger Ax^{1}$ and $CA^\dagger y^{1}+(D-CA^\dagger B)x^{2}=y^{2}$. 
Premultiplying the first equation by $A$, we have $AA^\dagger y^{1}-Bx^{2}=Ax^{1}$ so that $Ax^{1}+Bx^{2}=AA^\dagger y^{1}$. 
Again, premultiplying the first equation by $C$, we get $CA^\dagger y^{1}-CA^\dagger Bx^{2}=Cx^{1}$. Hence, using the 
second equation we have, $Cx^{1}+Dx^{2}=CA^\dagger y^{1}-CA^\dagger Bx^{2}+Dx^{2}=y^{2}$, proving that $M
\left(\begin{array}{cc}
x^{1}\\
x^{2}
\end{array}\right)
=
\left(\begin{array}{cc}
AA^\dagger y^{1}\\
y^{2}
\end{array}\right)$.
\end{proof}

Let $M=\left(\begin{array}{cc}
A & B\\
C & D
\end{array}\right)$ with $R(B)\subseteq R(A)$ and $R(C^T)\subseteq R(A^T)$. First, in the following example, 
we show that $H^\dagger \neq J$ in general.  

\begin{ex}
Let $A=\left(\begin{array}{cc}
1 & -1\\
2 & -2
\end{array}\right)$, $B=\left(\begin{array}{cc}
1\\
2
\end{array}\right)$, $C=\left(\begin{array}{cc}
-1 & 1
\end{array}\right)$ and $D=\left(\begin{array}{c}
0
\end{array}\right)$. Then $M=\left(\begin{array}{ccc}
1 & -1 & 1\\
2 & -2 & 2\\
-1 & 1 & 0
\end{array}\right)$, $R(B)\subseteq R(A)$, $R(C^T)\subseteq R(A^T)$, $H=pppt(M,A)_{\dagger}=\frac{1}{10}\left(\begin{array}{ccc}
1 & 2 & -5\\
-1 & -2 & 5\\
-2 & -4 & 10
\end{array}\right)$, $J=cpppt(M,D)_{\dagger}=\left(\begin{array}{ccc}
1 & -1 & 0\\
2 & -2 & 0\\
0 & 0 & 0
\end{array}\right)$ and $H^{\dagger}=\left(\begin{array}{ccc}
1 & -1 & 0\\
2 & -2 & 0\\
-1 & 1 & 1
\end{array}\right)\neq J$.\\
\end{ex}

Now, we give the necesarry conditions under which $H^\dagger=J$. Once again, the natural conditions are handy.

\begin{thm} \label{pptcppt}
Let $A \in \mathbb{R}^{m \times n}, B \in \mathbb{R}^{m \times p}, C \in \mathbb{R}^{s \times n}, D\in \mathbb{R}^{s \times p}$ and $M=
\left(\begin{array}{cc}
  A & B \\
  C & D
 \end{array}\right)$. Suppose that $R(B)\subseteq R(A)$, $R(C^T)\subseteq R(A^T)$, $R(C)\subseteq R(D)$ and $R(B^T)\subseteq R(D^T)$. 
Then $H^\dagger=J$, where $H=pppt(M,A)_{\dagger}$ and $J=cpppt(M,D)_{\dagger}$.\\
\end{thm}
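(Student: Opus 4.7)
The plan is to verify the four Moore--Penrose equations for $J$ as the pseudo-inverse of $H$ by a direct block computation, namely
\[
HJH=H,\qquad JHJ=J,\qquad (HJ)^{T}=HJ,\qquad (JH)^{T}=JH.
\]
First I would translate each of the four range hypotheses into the working identities
\[
AA^{\dagger}B=B,\quad CA^{\dagger}A=C,\quad DD^{\dagger}C=C,\quad BD^{\dagger}D=B,
\]
using that $A^{\dagger}A$ and $AA^{\dagger}$ (resp.\ $D^{\dagger}D$ and $DD^{\dagger}$) are the orthogonal projections onto $R(A^{T}),R(A)$ (resp.\ $R(D^{T}),R(D)$). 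These, together with the defining relations $F=D-CA^{\dagger}B$ and $G=A-BD^{\dagger}C$ and the Penrose identities for $A$ and $D$, will be the only tools needed.

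Next I would compute the products $HJ$ and $JH$ in $2\times 2$ block form. The expectation (which motivates the proof) is that
\[
HJ=\begin{pmatrix} A^{\dagger}A & 0 \\ 0 & DD^{\dagger}\end{pmatrix},
\qquad
JH=\begin{pmatrix} AA^{\dagger} & 0 \\ 0 & D^{\dagger}D\end{pmatrix}.
\]
The $(1,2)$ block of $HJ$ is $A^{\dagger}BD^{\dagger}-A^{\dagger}BD^{\dagger}=0$ automatically; the $(1,1)$ block collapses via $A^{\dagger}G+A^{\dagger}BD^{\dagger}C=A^{\dagger}A$; the $(2,2)$ block gives $CA^{\dagger}BD^{\dagger}+FD^{\dagger}=DD^{\dagger}$; and the $(2,1)$ block reduces to $CA^{\dagger}A-DD^{\dagger}C$, which vanishes precisely by two of the range identities above. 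The computation for $JH$ is entirely symmetric, with the vanishing of its $(1,2)$ block $-AA^{\dagger}B+BD^{\dagger}D$ using the other two range identities.

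Once $HJ$ and $JH$ are known to be block-diagonal with orthogonal-projection blocks, the symmetry conditions $(HJ)^{T}=HJ$ and $(JH)^{T}=JH$ are immediate. The remaining two identities then follow from one further multiplication: for $HJH$, the diagonal action of $\operatorname{diag}(A^{\dagger}A,DD^{\dagger})$ on the rows of $H$ leaves $A^{\dagger}$ and $-A^{\dagger}B$ untouched (since $A^{\dagger}AA^{\dagger}=A^{\dagger}$) and, using $DD^{\dagger}C=C$, returns $CA^{\dagger}$ and $F$ on the bottom. The verification of $JHJ=J$ mirrors this, using $CA^{\dagger}A=C$ and $D^{\dagger}DD^{\dagger}=D^{\dagger}$.

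The main obstacle is not any single step but the bookkeeping: every block reduction hinges on applying the correct one of the four range identities, and it is easy to confuse $A^{\dagger}A$ with $AA^{\dagger}$ (and similarly for $D$). Organising the proof around the clean intermediate formulas for $HJ$ and $JH$ isolates where each hypothesis is used, and makes all four Penrose equations fall out with essentially no further computation.
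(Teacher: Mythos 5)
Your proposal is correct and follows essentially the same route as the paper: both verify the four Penrose equations directly, hinging on the same intermediate identities $HJ=\bigl(\begin{smallmatrix} A^{\dagger}A & 0 \\ 0 & DD^{\dagger}\end{smallmatrix}\bigr)$ and $JH=\bigl(\begin{smallmatrix} AA^{\dagger} & 0 \\ 0 & D^{\dagger}D\end{smallmatrix}\bigr)$ and the same four range-inclusion consequences $AA^{\dagger}B=B$, $CA^{\dagger}A=C$, $DD^{\dagger}C=C$, $BD^{\dagger}D=B$. The only cosmetic difference is which factor you associate first in computing $HJH$ and $JHJ$, which changes nothing.
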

\begin{proof}
$J=\left(\begin{array}{cc}
G & BD^\dagger\\
-D^\dagger C & D^\dagger
\end{array}\right)$ and so, 
\begin{center}
$JH=\left(\begin{array}{cc}
GA^\dagger+BD^\dagger CA^\dagger & -GA^\dagger B+BD^\dagger F\\
0 & D^\dagger CA^\dagger B+D^\dagger F
\end{array}\right)$.
\end{center}
We have, 
\begin{center} 
$GA^\dagger+BD^\dagger CA^\dagger=(G+BD^\dagger C)A^\dagger =AA^\dagger$,
\end{center}
$-GA^\dagger B+BD^\dagger F=-(A-BD^\dagger C)A^\dagger B+BD^\dagger(D-CA^\dagger B)=-B+BD^\dagger D=0$. 
Also, $D^\dagger CA^\dagger B+D^\dagger F=D^\dagger(F+CA^\dagger B)=D^\dagger D$. So, $JH=\left(\begin{array}{cc}
AA^\dagger & 0\\
0 & D^\dagger D
\end{array}\right)$.
Thus $(JH)^T=JH$. Also, $JHJ=\left(\begin{array}{cc}
AA^\dagger G & AA^\dagger BD^\dagger\\
-D^\dagger C & D^\dagger
\end{array}\right)=\left(\begin{array}{cc}
G & BD^\dagger\\
-D^\dagger C & D^\dagger
\end{array}\right)$, since $AA^\dagger B=B$ and $AA^\dagger G=AA^\dagger(A-BD^\dagger C)=A-BD^\dagger C=G$.\\
Next, \begin{eqnarray*}
 HJH &=&\left(\begin{array}{cc}
A^\dagger & -A^\dagger BD^\dagger D\\
CA^\dagger & FD^\dagger D
\end{array}\right)\\
&=&\left(\begin{array}{cc}
A^\dagger & -A^\dagger B\\
CA^\dagger & F
\end{array}\right),
\end{eqnarray*}
since $FD^\dagger D=(D-CA^\dagger B)D^\dagger D=D-CA^\dagger BD^\dagger D=D-CA^\dagger B=F$. 
Also, 
\begin{center}
$HJ=\left(\begin{array}{cc}
A^\dagger G+A^\dagger BD^\dagger C & 0\\
CA^\dagger G-FD^\dagger C & CA^\dagger BD^\dagger+ FD^\dagger
\end{array}\right)$.
\end{center}
We have 
\begin{center}
$A^\dagger G+A^\dagger BD^\dagger C=A^\dagger(G+BD^\dagger C) =A^\dagger A$,
\end{center}
\begin{center}
 $CA^\dagger G-FD^\dagger C=CA^\dagger(A-BD^\dagger C)-(D-CA^\dagger B)D^\dagger C=C-DD^\dagger C=0$,
\end{center}
since $R(C)\subseteq R(D)$. Finally, $CA^\dagger BD^\dagger+FD^\dagger=(CA^\dagger B+F)D^\dagger =DD^\dagger$. 
So, $HJ=\left(\begin{array}{cc}
A^\dagger A & 0\\
0 & DD^\dagger
\end{array}\right)$, so that $(HJ)^T=HJ$.
\end{proof}

It is well known that the Schur complement and formulae for inverses of partitioned matrices go 
hand in hand. We proceed in the same spirit, where we first consider the Moore-Penrose 
inverse of partitioned matrices. The following result is quite well known. This 
has been proved in \cite{carlhaymark}. However, we 
provide an alternative proof for the sake of completeness and ready reference. 
\begin{thm}\label{MPA}
Let $A \in \mathbb{R}^{m \times n}, B \in \mathbb{R}^{m \times p}, C \in \mathbb{R}^{s \times n}, D\in \mathbb{R}^{s \times p}$ and $M=
\left( \begin{array}{cc} A & B \\ C & D \end{array} \right)$. Suppose that $ R(C^T)\subseteq R(A^T)$, $R(B)\subseteq R(A)$, $R(CA^\dagger)\subseteq R(F)$ and 
$R((A^\dagger B)^T)\subseteq R(F^T)$, where $F=D-CA^\dagger B$. Then
\begin{center}
$M^\dagger=\left( \begin{array}{cc}
A^\dagger +A^\dagger BF^\dagger CA^\dagger & -A^\dagger BF^\dagger \\
-F^\dagger CA^\dagger & F^\dagger
\end{array} \right)$ . 
\end{center} 
\end{thm}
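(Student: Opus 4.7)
Write $X$ for the matrix on the right-hand side of the claim. The plan is to verify the four defining Moore--Penrose equations $MXM=M$, $XMX=X$, $(MX)^T=MX$, $(XM)^T=XM$ by direct block multiplication, using the four range hypotheses only through the algebraic identities they produce: $AA^\dagger B=B$ (from $R(B)\subseteq R(A)$), $CA^\dagger A=C$ (from $R(C^T)\subseteq R(A^T)$), $FF^\dagger CA^\dagger=CA^\dagger$ (from $R(CA^\dagger)\subseteq R(F)$), and $A^\dagger B\,F^\dagger F=A^\dagger B$ (from $R((A^\dagger B)^T)\subseteq R(F^T)$). Together with the standard symmetries of $AA^\dagger$, $A^\dagger A$, $FF^\dagger$, and $F^\dagger F$, these will be the only tools required.

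First I would compute $MX$ blockwise. The diagonal blocks telescope to $AA^\dagger$ and $FF^\dagger$, and both off-diagonal blocks collapse to zero: the $(1,2)$ block is $(B-AA^\dagger B)F^\dagger$, which vanishes by the first identity, while the $(2,1)$ block is $CA^\dagger-FF^\dagger CA^\dagger$, which vanishes by the third. Thus $MX=\mathrm{diag}(AA^\dagger,FF^\dagger)$, which is symmetric, settling $(MX)^T=MX$. A parallel computation, now using the second and fourth identities to kill the off-diagonals, gives $XM=\mathrm{diag}(A^\dagger A,F^\dagger F)$, settling $(XM)^T=XM$. This is the step where the third and fourth range hypotheses do their real work.

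With $MX$ and $XM$ identified as these natural block range projectors, the remaining two equations are short. For $MXM=M$, I would multiply $MX$ on the right by $M$; the top row is immediately $(A,B)$, while the bottom row requires $FF^\dagger C=C$ and $FF^\dagger D=D$, both of which I would obtain by writing $C=CA^\dagger A$ and $D=F+CA^\dagger B$ and reapplying $FF^\dagger CA^\dagger=CA^\dagger$ together with $FF^\dagger F=F$. For $XMX=X$, I would multiply $X$ on the left by $XM$; each block of $X$ already begins with $A^\dagger$, $A^\dagger B$, $F^\dagger CA^\dagger$, or $F^\dagger$, so the factors $A^\dagger A$ and $F^\dagger F$ absorb on sight.

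I do not anticipate any real obstacle here: the hypotheses are tailored exactly to force the two off-diagonal blocks of $MX$ and $XM$ to vanish, after which everything else is formal. The only point demanding attention is careful bookkeeping of which range hypothesis licenses each cancellation, and of the two decompositions $C=CA^\dagger A$, $D=F+CA^\dagger B$ that feed the verification of $MXM=M$.
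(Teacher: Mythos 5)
Your proposal is correct, and it follows the same overall strategy as the paper: verify the four Penrose equations by block multiplication, reducing $MX$ and $XM$ to $\mathrm{diag}(AA^\dagger,FF^\dagger)$ and $\mathrm{diag}(A^\dagger A,F^\dagger F)$ via the four range hypotheses. There is, however, one step where you take a genuinely cleaner route. The paper verifies $MXM=M$ by computing $M(XM)=M\,\mathrm{diag}(A^\dagger A,F^\dagger F)$, which forces it to establish $BF^\dagger F=B$ and $DF^\dagger F=D$; the first of these does \emph{not} follow formally from the identity $A^\dagger BF^\dagger F=A^\dagger B$, and the paper needs a separate argument (for $y=B(I-F^\dagger F)x$ one has $y\in R(B)\subseteq R(A)$ and $A^\dagger y=0$, so $y\in R(A)\cap N(A^T)=\{0\}$). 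You instead compute $(MX)M=\mathrm{diag}(AA^\dagger,FF^\dagger)M$, which only requires $AA^\dagger B=B$, $FF^\dagger C=FF^\dagger CA^\dagger A=CA^\dagger A=C$, and $FF^\dagger D=FF^\dagger(F+CA^\dagger B)=F+CA^\dagger B=D$ --- all immediate from your four stated identities. Your choice of which projector to absorb thus eliminates the paper's auxiliary lemma entirely; the rest of your outline ($XMX=X$ by absorbing $\mathrm{diag}(A^\dagger A,F^\dagger F)$ into the left factors of $X$, and symmetry of the two diagonal products) matches the paper and goes through as you describe.
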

\begin{proof}
First, we observe that $CA^\dagger A=C$ (since $R(C^T)\subseteq R(A^T)$) and 
$A^\dagger B=A^\dagger BFF^\dagger$ (since $R((A^\dagger B)^T)\subseteq R(F^T)$). For $x\in \mathbb{R}^{p}$, let $y=B(I-F^\dagger F)x$.
Then $y \in R(B) \subseteq R(A)$. Also,
$A^\dagger y= A^\dagger B(I-F^\dagger F)x=0 $ so that $y \in N(A^\dagger)=N(A^T)$.
This means that $y=0$ and so $BF^\dagger F=B$. We then have $DF^\dagger F=(F+CA^\dagger B)F^\dagger F = F+CA^\dagger BF^\dagger F
                 = F+CA^\dagger B
                 = D$.
Also, 
$A^{\dagger} B + A^{\dagger} B F^{\dagger} CA^{\dagger}B-A^{\dagger}BF^\dagger D=A^\dagger B + A^\dagger BF^\dagger (CA^\dagger B- D) =A^\dagger B-A^\dagger
BF^\dagger F=0$.\\
Set $X=
\left(\begin{array}{cc}
  A^\dagger+A^\dagger BF^\dagger CA^\dagger & -A^\dagger BF^\dagger \\
  -F^\dagger CA^\dagger & F^\dagger
 \end{array}\right)$. Then
\begin{center}
$XM =\left(\begin{array}{cc}
  A^\dagger + A^\dagger BF^\dagger CA^\dagger & -A^\dagger BF^\dagger \\
  -F^\dagger CA^\dagger & F^\dagger
 \end{array}\right)$
$\left(\begin{array}{cc}
  A & B \\
  C & D
 \end{array}\right)=
\left(\begin{array}{cc}
  A^\dagger A & 0 \\
  O & F^\dagger F
 \end{array}\right)$.
\end{center}
So, $(XM)^T=XM$.
Also,
$MXM=
\left(\begin{array}{cc}
  A & BF^\dagger F \\
  C & DF^\dagger F
 \end{array}\right)=\left(\begin{array}{cc}
A & B\\
C & D
\end{array}\right)=M$. Further, \\
\begin{eqnarray*}
 XMX 
&=&
\left(\begin{array}{cc}
  A^\dagger A & 0 \\
  O & F^\dagger F
 \end{array}\right)
\left(\begin{array}{cc}
  A^\dagger+A^\dagger BF^\dagger CA^\dagger & -A^\dagger BF^\dagger \\
  -F^\dagger C A^\dagger & F^\dagger
 \end{array}\right) \\
&=&
\left(\begin{array}{cc}
  A^\dagger+A^\dagger BF^\dagger CA^\dagger & -A^\dagger BF^\dagger \\
  -F^\dagger C A^\dagger & F^\dagger
 \end{array}\right)\\
&=& X. 
\end{eqnarray*}\\
Finally,\\
\begin{center}
$MX=
\left(\begin{array}{cc}
  A & B \\
  C & D
 \end{array}\right)
\left(\begin{array}{cc}
  A^\dagger+A^\dagger BF^\dagger CA^\dagger & -A^\dagger BF^\dagger \\
  -F^\dagger C A^\dagger & F^\dagger
 \end{array}\right)
=
\left(\begin{array}{cc}
  AA^\dagger & 0 \\
  0 & FF^\dagger
 \end{array}\right)$,
\end{center} where we have used the facts that $AA^\dagger B=B$ (since $R(B)\subseteq R(A)$) and 
$ CA^\dagger=FF^\dagger CA^\dagger$ (since $R(CA^\dagger)\subseteq R(F)$). Clearly, $(MX)^T=MX$, completing the proof.
\end{proof}
Next, we illustrate the above theorem with the help of example.
\begin{ex}
Let $A=\left(\begin{array}{cc}
1 & -1\\
2 & -2
\end{array}\right)$, $B=\left(\begin{array}{cc}
1 & -2\\
2 & -4
\end{array}\right)$, $C=\left(\begin{array}{cc}
1 & -1\\
-1 & 1
\end{array}\right)$ and $D=\left(\begin{array}{cc}
1 & 1\\
0 & 0
\end{array}\right)$.
Then $M=\left(\begin{array}{cccc}
1 & -1 & 1  & -2\\
2 & -2 & 2 & -4\\
1 & -1 & 1 & 1\\
-1 & 1 & 0 & 0
\end{array}\right)$, $R(B)\subseteq R(A)$, $R(C^T)\subseteq R(A^T)$, $R(CA^\dagger)\subseteq R(F)$ and $R((A^\dagger B)^T)\subseteq R(F^T)$.
\end{ex}

\begin{center}
$M^\dagger=\left(\begin{array}{cc}
 A^\dagger+A^\dagger BF^\dagger CA^\dagger & -A^\dagger BF^\dagger \\
  -F^\dagger C A^\dagger & F^\dagger
 \end{array}\right)=\left(\begin{array}{cccc}
 0 & 0 & 0 & \frac{-1}{2}\\
0 & 0 & 0 & \frac{1}{2}\\
\frac{1}{15}& \frac{2}{15} & \frac{2}{3} & 1\\
\frac{-1}{15} & \frac{-2}{15} & \frac{1}{3} & 0
\end{array}\right)$.
\end{center}
 Next, we state a complementary result. This does 
not seem to be as well known as the previous result. However, we skip its proof. Note that this result 
uses the pseudo Schur complement $G=A-BD^\dagger C$, which is called the complementary 
Schur complement. This time, the natural conditions are $R(B^T)\subseteq R(D^T)$ 
and $R(C)\subseteq R(D)$. These conditions guarantee that the complementary Schur complement 
$G=A-BD^{\{1\}} C$ is invariant under any $\{1\}$-inverse $D^{\{1\}}$ of $D$.

\begin{thm}\label{MPD}
Let $M=
\left(\begin{array}{cc}
  A & B \\
  C & D
 \end{array}\right)$ with the blocks defined as earlier. Suppose that $R(B^T)\subseteq R(D^T)$, 
$R(C)\subseteq R(D)$, $R(BD^\dagger)\subseteq R(G)$ and $R((D^\dagger C)^T)\subseteq R(G^T)$, where
   $G=A-BD^\dagger C$. Then 
\begin{center}
$M^\dagger=
 \left(\begin{array}{cc}
  G^\dagger & -G^\dagger BD^\dagger \\
  -D^\dagger C G^\dagger & D^\dagger+D^\dagger CG^\dagger BD^\dagger
\end{array}\right)$. 
\end{center}
\end{thm}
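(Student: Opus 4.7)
The plan is to mirror the proof of Theorem \ref{MPA}, now with the roles of $A$ and $D$ (and of $F$ and $G$) interchanged. Setting $X = \left(\begin{array}{cc} G^\dagger & -G^\dagger BD^\dagger \\ -D^\dagger CG^\dagger & D^\dagger + D^\dagger CG^\dagger BD^\dagger \end{array}\right)$, I would verify directly that $X$ satisfies the four defining equations for $M^\dagger$.

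First I would collect the preliminary identities, strictly analogous to those at the start of the proof of Theorem \ref{MPA}. From $R(B^T)\subseteq R(D^T)$ one obtains $BD^\dagger D = B$; from $R(C)\subseteq R(D)$ one obtains $DD^\dagger C = C$; from $R(BD^\dagger)\subseteq R(G)$ one obtains $GG^\dagger BD^\dagger = BD^\dagger$; and from $R((D^\dagger C)^T)\subseteq R(G^T)$ one obtains $D^\dagger CG^\dagger G = D^\dagger C$. The one step that is not pure substitution is the analogue of the argument proving $BF^\dagger F = B$ in Theorem \ref{MPA}: for arbitrary $x$, set $y = C(I - G^\dagger G)x$. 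Then $y \in R(C) \subseteq R(D)$, while $D^\dagger y = (D^\dagger C)(I - G^\dagger G)x = 0$ by the identity just noted. Since $R(D) \cap N(D^T) = \{0\}$, one concludes $y = 0$ and hence $CG^\dagger G = C$. Combined with $A = G + BD^\dagger C$, this yields both $AG^\dagger G = A$ and $GG^\dagger A = A$.

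With these identities in hand the rest is routine block multiplication. The key intermediate computation is that $XM = \left(\begin{array}{cc} G^\dagger G & 0 \\ 0 & D^\dagger D \end{array}\right)$ and $MX = \left(\begin{array}{cc} GG^\dagger & 0 \\ 0 & DD^\dagger \end{array}\right)$, the off-diagonal blocks collapsing precisely because of the identities above (for example, the top-right block of $XM$ is $G^\dagger B - G^\dagger BD^\dagger D = 0$, and the bottom-left block of $MX$ is $CG^\dagger - DD^\dagger CG^\dagger = (C - DD^\dagger C)G^\dagger = 0$). These block-diagonal forms are manifestly symmetric, giving $(XM)^T = XM$ and $(MX)^T = MX$. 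One more multiplication then yields $MXM = M$ (using $GG^\dagger A = A$, $GG^\dagger B = GG^\dagger BD^\dagger D = BD^\dagger D = B$, $DD^\dagger C = C$ and $DD^\dagger D = D$) and $XMX = X$ (by absorbing the projectors $G^\dagger G$ and $D^\dagger D$ into the blocks of $X$).

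The only conceptual step in this whole argument is the auxiliary derivation of $CG^\dagger G = C$; everything else is pure bookkeeping, and I do not expect a genuine obstacle, since the symmetry between Theorems \ref{MPA} and \ref{MPD} is tight enough that the proof transposes almost line for line.
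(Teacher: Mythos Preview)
Your proposal is correct, and it is precisely the argument the paper has in mind: the paper explicitly omits the proof of Theorem \ref{MPD}, presenting it as the complementary version of Theorem \ref{MPA}, so mirroring that proof with the roles of $A,F$ and $D,G$ swapped is exactly the intended route. Your auxiliary step deriving $CG^\dagger G = C$ is the correct analogue of the $BF^\dagger F = B$ step, and the resulting block-diagonal forms of $XM$ and $MX$ then give all four Moore--Penrose equations just as you describe.
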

By comparing the two expressions for $M^\dagger$, we obtain the formulae:
\begin{center}
$G^\dagger =A^\dagger+A^\dagger BF^\dagger CA^\dagger$ and $F^\dagger=D^\dagger+D^\dagger CG^\dagger BD^\dagger$,
\end{center}
in the presence of all the eight inclusions of Theorem \ref{MPA} and Theorem \ref{MPD}. Using these 
formulae, next we obtain another expression for the Moore-Penrose inverse of $M$ involving the pseudo 
Schur complements of $A$ and $D$. 

\begin{thm}\label{MPAD}
Let $M=
\left(\begin{array}{cc}
  A & B \\
  C & D
 \end{array}\right)$. Suppose that $ R(C^T)\subseteq R(A^T)$, $R(B)\subseteq R(A)$, $R((A^\dagger B)^T)\subseteq R(F^T)$, 
$R(CA^\dagger)\subseteq R(F)$, $R(C)\subseteq R(D)$, $R(B^T)\subseteq R(D^T)$, $R(BD^\dagger)\subseteq R(G)$ and 
$R((D^\dagger C)^T)\subseteq R(G^T)$, where
$F=D-CA^\dagger B$ and $G=A-BD^\dagger C$. Then
\begin{center}
$M^\dagger=
\left(\begin{array}{cc}
G^\dagger & -A^\dagger BF^\dagger\\
-D^\dagger CG^\dagger & F^\dagger
\end{array}\right)$.
\end{center}
\end{thm}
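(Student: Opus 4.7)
The plan is to exploit the fact that under the hypotheses of Theorem \ref{MPAD}, the complete set of eight range inclusions from Theorems \ref{MPA} and \ref{MPD} holds simultaneously. Hence both theorems apply and each provides a valid closed-form expression for the same matrix $M^\dagger$. The strategy is then purely algebraic: splice together a hybrid block representation by using the uniqueness of $M^\dagger$.

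First I would compare the two formulas block by block. Since both $2\times 2$ block matrices equal $M^\dagger$, their corresponding entries must coincide. From the diagonal blocks one recovers the identities
\[
G^\dagger = A^\dagger + A^\dagger B F^\dagger C A^\dagger, \qquad F^\dagger = D^\dagger + D^\dagger C G^\dagger B D^\dagger,
\]
already noted in the remark preceding the theorem, while the off-diagonal blocks yield the companion identities $A^\dagger B F^\dagger = G^\dagger B D^\dagger$ and $F^\dagger C A^\dagger = D^\dagger C G^\dagger$. Starting from the expression furnished by Theorem \ref{MPA}, I would replace the $(1,1)$-entry $A^\dagger + A^\dagger BF^\dagger C A^\dagger$ by $G^\dagger$, and the $(2,1)$-entry $-F^\dagger CA^\dagger$ by $-D^\dagger C G^\dagger$, while keeping the $(1,2)$- and $(2,2)$-entries $-A^\dagger BF^\dagger$ and $F^\dagger$ unchanged. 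This produces precisely the mixed Schur-complement formula in the statement.

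There is no genuine obstacle here, since the uniqueness of $M^\dagger$ guarantees that any consistent combination of blocks from the two earlier expressions represents the same matrix; the only care needed is notational bookkeeping to confirm which blocks are taken from which theorem. A self-contained alternative would be to define $X$ as the right-hand side of the claimed formula and verify directly the four Moore-Penrose equations for $MX$, $XM$, $MXM$ and $XMX$, but that route would essentially duplicate the computations already performed in the proofs of Theorems \ref{MPA} and \ref{MPD}, so the splicing approach is clearly the most economical.
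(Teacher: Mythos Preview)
Your proposal is correct and follows essentially the same approach as the paper: the text preceding Theorem \ref{MPAD} explicitly derives the identities $G^\dagger = A^\dagger + A^\dagger BF^\dagger CA^\dagger$ and $F^\dagger = D^\dagger + D^\dagger CG^\dagger BD^\dagger$ by comparing the block entries of the two expressions for $M^\dagger$ furnished by Theorems \ref{MPA} and \ref{MPD}, and then states that these formulae yield the hybrid expression. Your write-up simply makes explicit the off-diagonal identity $F^\dagger CA^\dagger = D^\dagger CG^\dagger$ needed for the $(2,1)$-block, which the paper leaves implicit.
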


\end{document}